\numberwithin{equation}{section}
\theoremstyle{plain}
\newtheorem{theorem}{Theorem}
\theoremstyle{plain}
\newtheorem{lemma}{Lemma}
\theoremstyle{definition}
\newtheorem{proof}{Proof}
\newtheorem{remark}{Remark}
\newcommand{\mlegendre}[2]{\left(\frac{#1}{#2}\right)}
\renewcommand{\abstract}{\begin{bf} Abstract.\end{bf}}
\begin{document}

\title{Large values of short character sums}
\author{A.\,B.~Kalmynin}
\address{National Research University Higher School of Economics,
Russian Federation, Math Department, International
Laboratory of Mirror Symmetry and Automorphic Forms}
\email{alkalb1995cd@mail.ru}
\date{}
\udk{}

\maketitle
\begin{abstract}
In this paper, we prove that for any $A>0$ there exist infinitely many primes $p$ for which sums of the Legendre symbol modulo $p$ over an interval of length $(\ln p)^A$ can take large values.
\end{abstract}
\begin{fulltext}
\section{Introduction}
The article \cite{GS} contains a variety of propositions on the values of character sums, including the following:

\begin{theorem}

Let $\chi$ be a nonprincipal character to the modulus $q$ and assume that the Riemann hypothesis for the function $L(s,\chi)$ holds. Then for $q\to \infty$ and $\frac{\ln x}{\ln\ln q}\to \infty$ we have

$$\sum_{n\leq x} \chi(n)=o(x)$$

\end{theorem}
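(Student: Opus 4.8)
The plan is to prove the estimate by a contour shift, after first replacing the sharp cutoff $\mathbf{1}_{[0,1]}$ by a smooth weight so that the line of integration can be pushed to $\mathrm{Re}(s)=1-\tfrac{1}{\ln\ln q}$, just left of the pole-free point $s=1$, without the $t$-aspect of $L(s,\chi)$ hurting us. Throughout I take $\chi$ primitive modulo $q$ (which is all that is needed below, $\chi$ being a Legendre symbol to a prime modulus), and I use periodicity of $\chi$ together with $\sum_{n=1}^{q}\chi(n)=0$ to reduce to $2\le x\le q$; note that the hypotheses force $x\to\infty$.

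First I would fix, for a parameter $H$ to be chosen later, a smooth $\Phi\colon[0,\infty)\to[0,1]$ with $\Phi\equiv 1$ on $[0,1]$, $\Phi\equiv 0$ on $[1+\tfrac1H,\infty)$ and $\|\Phi^{(j)}\|_{1}\ll_{j}H^{j-1}$. Since $\mathbf{1}_{[0,1]}(n/x)$ and $\Phi(n/x)$ agree except for $x<n\le x+x/H$, we have $\sum_{n\le x}\chi(n)=\sum_{n}\chi(n)\Phi(n/x)+O(x/H+1)$. By Mellin inversion, $\sum_{n}\chi(n)\Phi(n/x)=\tfrac{1}{2\pi i}\int_{(c)}L(s,\chi)\,\widetilde\Phi(s)\,x^{s}\,ds$ with $c=1+\tfrac1{\ln x}$, where integration by parts yields $\widetilde\Phi(\sigma+it)\ll_{N}(1+|t|)^{-1}(1+|t|/H)^{-N}$ for every $N\ge 0$, uniformly for $\tfrac12\le\sigma\le 2$. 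As $\chi$ is nonprincipal, $L(s,\chi)$ is entire, so the rapid decay of $\widetilde\Phi$ lets me move the contour to $\mathrm{Re}(s)=\sigma_{0}:=1-\tfrac1{\ln\ln q}$ without crossing any singularity. Thus $\sum_{n\le x}\chi(n)=\tfrac{1}{2\pi i}\int_{(\sigma_{0})}L(s,\chi)\widetilde\Phi(s)x^{s}\,ds+O(x/H+1)$, whose main term is $\ll x^{\sigma_{0}}\int_{-\infty}^{\infty}|L(\sigma_{0}+it,\chi)|\,|\widetilde\Phi(\sigma_{0}+it)|\,dt$.

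The step I expect to be the main obstacle is the conditional Littlewood-type bound this now depends on: under the Riemann hypothesis for $L(s,\chi)$, for $\tfrac12+\tfrac1{\ln\ln(q(|t|+2))}\le\sigma\le 1$ one has $\log|L(\sigma+it,\chi)|\ll\frac{(\ln q(|t|+2))^{2-2\sigma}}{(2\sigma-1)\,\ln\ln q(|t|+2)}$. I would derive this from the Hadamard factorization and the explicit formula for $L'/L$, expressing $\log L(s,\chi)$ as a sum over the zeros $\rho=\tfrac12+i\gamma$, splitting according to the size of $|\gamma-t|$, using $\#\{\gamma:|\gamma-t|\le 1\}\ll\ln q(|t|+2)$, and optimizing the cutoff; this is the one genuinely delicate estimate. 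Granting it, on the line $\mathrm{Re}(s)=\sigma_{0}$ and for $|t|\le q$ we get $(\ln q(|t|+2))^{2-2\sigma_{0}}\le(2\ln q+\ln 3)^{2/\ln\ln q}=e^{2}(1+o(1))$ while $2\sigma_{0}-1\to 1$, so $\log|L(\sigma_{0}+it,\chi)|\ll\tfrac1{\ln\ln q}$, i.e. $|L(\sigma_{0}+it,\chi)|\ll 1$ there; for $|t|>q$ the same bound gives $|L(\sigma_{0}+it,\chi)|\ll_{\varepsilon}|t|^{\varepsilon}$, which $\widetilde\Phi$ absorbs with room to spare.

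Combining these, and keeping $H\le q$, the vertical integral is $\ll\ln H$ (the decay of $\widetilde\Phi$ effectively truncates it at height $H$, where $|L(\sigma_{0}+it,\chi)|\ll 1$). Hence $\sum_{n\le x}\chi(n)\ll x^{\sigma_{0}}\ln H+x/H=x\,e^{-\omega}\ln H+x/H$, where $\omega:=\ln x/\ln\ln q\to\infty$. Choosing $H=\lfloor e^{\sqrt{\omega}}\rfloor$ — which is $\le e^{\sqrt{\ln q}}\le q$ once $q$ is large — turns this into $\ll x\bigl(\sqrt{\omega}\,e^{-\omega}+e^{-\sqrt{\omega}}\bigr)+O(1)=o(x)$, as required. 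The hypothesis $\ln x/\ln\ln q\to\infty$ is used exactly here, to make $x^{\sigma_{0}-1}=e^{-\omega}$ shrink quickly enough to overcome the mild $\ln H$ loss from the vertical integral.
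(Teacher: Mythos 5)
The paper does not actually prove this statement: Theorem 1 is quoted from \cite{GS}, and no internal argument is given, so there is nothing in the text to compare against line by line. Your proposal is a genuinely different route from the one in \cite{GS} and, as far as I can check, it works. The proof in \cite{GS} uses the Riemann hypothesis only through square-root cancellation in sums of $\chi$ over primes and then climbs from primes to all integers (via $\ln n=\sum_{d\mid n}\Lambda(d)$, equivalently the mean-value machinery for multiplicative functions), the hypothesis $\ln x/\ln\ln q\to\infty$ entering when the large divisors are bounded trivially; you instead shift a smoothed Mellin integral to $\mathrm{Re}\,s=1-1/\ln\ln q$ and feed in Littlewood's conditional bound $\log|L(\sigma+it,\chi)|\ll(\ln Q)^{2-2\sigma}/((2\sigma-1)\ln\ln Q)$, $Q=q(|t|+2)$. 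Your bookkeeping is correct: on that line the bound gives $|L|\ll1$ for $|t|\le q$ and $\ll_\varepsilon|t|^\varepsilon$ beyond, the vertical integral costs only $\ln H$, and $H=e^{\sqrt\omega}$ with $\omega=\ln x/\ln\ln q$ balances the smoothing error; this even yields the quantitative rate $\ll x\exp(-c\sqrt{\ln x/\ln\ln q})$ rather than a bare $o(x)$. What the approach costs is that everything now rests on the Littlewood-type estimate, which you rightly identify as the crux but only sketch; it is a true classical theorem (Hadamard factorization plus a Borel--Carath\'eodory and local zero-counting argument, see \cite{Mont}, Ch.~13), so your proof is complete only modulo that reference, whereas the \cite{GS} route needs nothing deeper than $\sum_{p\le y}\chi(p)\ln p\ll\sqrt{y}\,\ln^2(qy)$ under GRH. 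Two minor loose ends: restricting to primitive $\chi$ suffices for this paper's application to the Legendre symbol, but does not literally cover imprimitive nonprincipal characters of small conductor, where the reduction $\sum_{n\le x}\chi(n)=\sum_{d\mid q}\mu(d)\chi^{*}(d)\sum_{m\le x/d}\chi^{*}(m)$ requires an extra divisor-sum estimate; and for $x\ge q$ it is cleanest to invoke P\'olya--Vinogradov directly rather than reduce $x$ modulo $q$, since the reduced length may be too short for the trivial bound yet have a noticeably smaller logarithm.
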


Denote by  $\mlegendre{a}{b}$ the Kronecker-Jacobi symbol and let $D$ be a fundamental discriminant, that is, $D \equiv 1 \pmod 4$ and squarefree or $4 \mid D$ and $\frac{D}{4} \equiv 2 \text{ or } 3 \pmod 4$ and is a squarefree number. Then the arithmetical function $\mlegendre{D}{n}$ is a primitive character to the modulus $|D|$ (\cite{Mont}, Theorem 9.13). Therefore, if the generalized Riemann hypothesis is true and $x(D)$ satisfies the condition $$\frac{x(D)}{(\ln |D|)^A} \to +\infty$$ for any $A$, then the sum of $\mlegendre{D}{n}$ over the interval $[1,x(D)]$ is $o(x(D))$ as  $|D|\to \infty$. In the opposite direction, the following theorem is proved:

\begin{theorem}
For all sufficiently large $q$ and any $A>0$ there exists a fundamental discriminant $D$ with $q\leq |D|\leq 2q$ such that the inequality

$$\sum_{n \leq x} \mlegendre{D}{n}\gg_A x$$

holds, where $x=(\frac{1}{3}\ln q)^A$.
\end{theorem}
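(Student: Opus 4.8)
The argument rests on the observation that the implied constant in $\gg_A$ is allowed to depend on $A$; hence it is enough to produce, for each fixed $A$, a fundamental discriminant $D$ with $q\le|D|\le 2q$ and $\sum_{n\le x}\mlegendre{D}{n}\ge c(A)\,x$ for \emph{some} $c(A)>0$, and we shall get $c(A)\asymp\rho(A)$, where $\rho$ is Dickman's function. Set $y=\tfrac13\ln q$, so that $x=y^{A}$ exactly, and let $\mathcal D$ be the set of squarefree integers $D$ with $q\le D\le 2q$, $D\equiv 1\pmod 8$, and $\mlegendre{D}{p}=1$ for every odd prime $p\le y$; every such $D$ is a squarefree integer $\equiv 1\pmod 4$, hence a fundamental discriminant. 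The defining conditions cut out a union of residue classes modulo $M:=8\prod_{2<p\le y}p$ among the squarefree integers, and since $\theta(y)<1.02\,y$ we have $M\le 8q^{0.34}$. A standard count of squarefree integers in arithmetic progressions then gives $|\mathcal D|\ge q^{\,1-o(1)}$ (using $\pi(y)\ll\ln q/\ln\ln q$, so that $2^{-\pi(y)}=q^{-o(1)}$); in particular $|\mathcal D|\to\infty$.

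The plan is to average $S(D):=\sum_{n\le x}\mlegendre{D}{n}$ over $D\in\mathcal D$ and bound the average from below by $\tfrac12\rho(A)x$; a suitable $D$ then exists since the maximum is at least the average. Since $n\mapsto\mlegendre{D}{n}$ is completely multiplicative and equals $1$ at every prime $p\le y$, for $D\in\mathcal D$ one has $\mlegendre{D}{n}=\mlegendre{D}{m}$, where $m=m(n)$ is the $y$-rough part of $n$, i.e.\ its largest divisor having no prime factor $\le y$. Grouping $n$ according to $m$ (the complementary factor being $y$-smooth), one gets
\[
S(D)=\sum_{\substack{m\le x\\ m\ \text{$y$-rough}}}\mlegendre{D}{m}\,\Psi(x/m,y),
\]
where $\Psi(t,y)$ is the number of $y$-smooth integers up to $t$. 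In the average over $\mathcal D$ the term $m=1$ contributes exactly $\Psi(x,y)$, and by the Dickman--de Bruijn asymptotic $\Psi(x,y)=\Psi(y^{A},y)=\rho(A)x\bigl(1+O(1/\ln\ln q)\bigr)$, a fixed positive multiple of $x$.

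Next I would bound the terms with $m>1$. If $m=k^{2}$ is a perfect square, then $k$ being $y$-rough forces $k>y$, and the average of $\mlegendre{D}{k^{2}}$ over $\mathcal D$ equals the proportion of $D\in\mathcal D$ coprime to $k$, which lies in $[0,1]$; hence these terms contribute a nonnegative amount, at most $\sum_{k>y}\Psi(x/k^{2},y)\le\sum_{k>y}x/k^{2}=O(x/y)$. If $m$ is not a perfect square, then $m$ is odd and coprime to $M$, so by Kronecker--Jacobi reciprocity $D\mapsto\mlegendre{D}{m}$ is a non-principal real character to a modulus dividing $4m$; combining it with the residue conditions modulo $M$ and clearing the squarefree condition by an elementary sieve, P\'olya--Vinogradov gives $\bigl|\sum_{D\in\mathcal D}\mlegendre{D}{m}\bigr|\ll q^{\,1-\delta}m^{1/2}$ for some fixed $\delta>0$, uniformly for $m\le x$. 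Dividing by $|\mathcal D|\ge q^{\,1-o(1)}$ and summing against $\Psi(x/m,y)\le x/m$ over $m\le x$ leaves $\ll q^{-\delta+o(1)}x^{3/2}=o(x)$, since $x$ is only a fixed power of $\ln q$. This error analysis --- the count $|\mathcal D|=q^{1-o(1)}$, the interaction of the squarefree condition with the characters $D\mapsto\mlegendre{D}{m}$, and the uniformity in the Dickman--de Bruijn estimate --- is the part I expect to require the most care, though each ingredient is routine.

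Collecting the three contributions,
\[
\frac{1}{|\mathcal D|}\sum_{D\in\mathcal D}S(D)\ \ge\ \Psi(x,y)-o(x)\ =\ \rho(A)x\bigl(1+o(1)\bigr)\ \ge\ \tfrac12\rho(A)x
\]
for all sufficiently large $q$. Since the maximum of $S(D)$ over $\mathcal D$ is at least this average, there is a fundamental discriminant $D$ with $q\le|D|\le 2q$ and $\sum_{n\le x}\mlegendre{D}{n}\ge\tfrac12\rho(A)x\gg_{A}x$, which is the assertion. The single substantive idea is the multiplicative identity isolating the smooth main term $\Psi(x,y)$: its size $\rho(A)x$ is a fixed positive multiple of $x$ for \emph{every} $A>0$, and that is precisely what makes the statement true for arbitrarily large $A$, even though $\sum_{n\le x}\mlegendre{D}{n}$ cannot then be close to $x$.
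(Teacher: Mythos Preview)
The paper does not give its own proof of this statement: Theorem~2 is quoted from Granville--Soundararajan~[GS], and the paper's contribution is Theorem~3, the strengthening to prime moduli. Your sketch is correct and is essentially the Granville--Soundararajan argument: restrict to discriminants $D$ for which every prime $p\le y$ has $\mlegendre{D}{p}=1$, so that the $y$-smooth main term $\Psi(x,y)\sim\rho(A)x$ survives intact, and dispose of the $y$-rough remainder by character-sum bounds. Since $x=(\tfrac13\ln q)^A$ is only polylogarithmic in $q$ while $M\ll q^{0.34}$, P\'olya--Vinogradov together with the M\"obius squarefree sieve is more than enough; the points you flag as needing care (the count $|\mathcal D|=q^{1-o(1)}$, non-principality of $D\mapsto\mlegendre{D}{m}$ for non-square $y$-rough $m$, uniformity in $\Psi$) are indeed routine.

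It is worth comparing your argument with the paper's proof of Theorem~3, since the skeleton is the same. There the average runs over primes $p\in(x,2x]$ with the multiplicative weight $w_p(M)=\prod_{q\le M}\bigl(1+\mlegendre{q}{p}\bigr)$, which plays exactly the role of your indicator $\mathbf 1_{\mathcal D}$; expanding $w_p(M)S(p,N)$ and collecting by squarefree kernel produces the counting function $r_{A,x}(c)$ of Lemma~5, whose lower bound $r_{A,x}(1)\gg_A N$ is again the Dickman estimate $\Psi(N,M)\sim\rho(3A)N$. The genuine extra difficulty in Theorem~3 is that sums of $\mlegendre{b}{p}$ over primes $p$ are controlled by $L$-functions and can be distorted by a possible Siegel zero, forcing the case analysis via Lemmas~1--3 and~6. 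Your setting avoids this entirely, because averaging over squarefree integers in progressions admits elementary error terms; that is precisely why Theorem~2 is the easier result.
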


In this paper we prove a strengthening of the Theorem 2, namely, the following fact:

\begin{theorem}

Let  $A\geq 1$ be an arbitrarily large fixed number, $x\geq x_0(A)$ and $y=(\ln x)^A$. Then there exists a prime number $p$ with $x<p\leq 2x$ such that the inequality

$$\sum_{n\leq y} \mlegendre{n}{p} \gg_A y$$

holds.

\end{theorem}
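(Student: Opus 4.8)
The plan is to produce a prime $p\in(x,2x]$ modulo which every prime up to a suitable bound is a quadratic residue, so that the Legendre symbol equals $1$ on a positive proportion of $[1,y]$. I would set $z=(\ln x)^{1/4}$ and $M=8\prod_{2<q\le z}q$; by the prime number theorem $M=\exp\big((1+o(1))(\ln x)^{1/4}\big)$, which is comfortably inside the range $q\le\exp(c\sqrt{\ln x})$ in which the classical zero-free region gives the Siegel--Walfisz estimate $\pi(2x;q,a)-\pi(x;q,a)=\frac{\pi(2x)-\pi(x)}{\varphi(q)}+O\!\big(x\exp(-c\sqrt{\ln x})\big)$, uniformly for $(a,q)=1$, save possibly for moduli divisible by a single exceptional modulus $q^{*}$. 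Let $V$ be the set of residues $a\bmod M$ with $a\equiv1\pmod 8$ that are nonzero squares modulo every odd prime $q\le z$; then $|V|/\varphi(M)\asymp 2^{-\pi(z)}$, and for a prime $p\equiv1\pmod8$ quadratic reciprocity gives $\mlegendre{q}{p}=\mlegendre{p}{q}$, so $p\bmod M\in V$ forces $\mlegendre{n}{p}=1$ for every $z$-smooth $n\le y<p$.

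Assume for the moment that no character mod $M$ is exceptional. Then $\mathcal P:=\{x<p\le2x:\ p\text{ prime},\ p\bmod M\in V\}$ satisfies $|\mathcal P|\sim\frac{|V|}{\varphi(M)}(\pi(2x)-\pi(x))\asymp 2^{-\pi(z)}\frac{x}{\ln x}\to\infty$, and the core of the argument is to average over $\mathcal P$:
$$\frac1{|\mathcal P|}\sum_{p\in\mathcal P}\sum_{n\le y}\mlegendre{n}{p}=\sum_{n\le y}\frac1{|\mathcal P|}\sum_{p\in\mathcal P}\mlegendre{n}{p}.$$
The $z$-smooth $n$ contribute $1$ apiece, i.e. $\Psi(y,z)$ in total; since $u:=\ln y/\ln z=4A$ is bounded, the Dickman asymptotic gives $\Psi(y,z)\sim\rho(4A)\,y\gg_A y$. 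For $n$ not $z$-smooth, write $n=mr$ with $m$ the $z$-smooth and $r>1$ the $z$-rough part, so $\mlegendre{n}{p}=\mlegendre{r}{p}$ on $\mathcal P$: if $r$ is a perfect square this contributes $+1$ (which only helps), and otherwise $p\mapsto\mlegendre{r}{p}$ is, by reciprocity, a nonprincipal real character of conductor $r_1\mid r$ coprime to $M$, so opening the condition $p\bmod M\in V$ by orthogonality expresses $\sum_{p\in\mathcal P}\mlegendre{r}{p}$ through prime sums of nonprincipal characters to moduli $\le My\le\exp((\ln x)^{1/3})$, each $O(x\exp(-c\sqrt{\ln x}))$ by Siegel--Walfisz; one checks this makes the inner average $o(1)$, uniformly in $r\le y$. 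Summing over the at most $y$ values of $n$ gives $\frac1{|\mathcal P|}\sum_{p\in\mathcal P}\sum_{n\le y}\mlegendre{n}{p}\ge\Psi(y,z)-o(y)\gg_A y$, so some $p\in\mathcal P$ works.

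The one genuinely delicate point — and the main obstacle, absent from the fundamental-discriminant setting of Theorem 2, where no prime need be produced — is the possible exceptional modulus $q^{*}$: if $q^{*}$ divides one of the moduli $Mr_1$ the cancellation of the corresponding prime sum can fail, and if $q^{*}\mid M$ it can annihilate $\mathcal P$ altogether. I would handle this using the uniqueness of $q^{*}$ and the fact that $q^{*}\to\infty$ as $x\to\infty$. If $q^{*}$ has a prime factor $q_1\le z$, take $q_1$ largest such (so $q_1\to\infty$) and delete the condition $\mlegendre{q_1}{p}=1$ from the definition of $V$: then $q^{*}\nmid M$, so $\mathcal P$ survives; only $O(\Psi(y,z)/q_1)=o(y)$ of the smooth $n$ are lost; and the only $n$ whose inner average need not be $o(1)$ are those divisible by $q_1$, at most $O(y/q_1)=o(y)$ of them, each contributing at least $-3$ (the exceptional term being $\le (2x)^{\beta}/x\le 2$ in size). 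If $q^{*}$ has no prime factor $\le z$, then $q^{*}\nmid M$ automatically and $q^{*}\mid Mr_1$ forces $q^{*}\mid n$, so again only $O(y/q^{*})=o(y)$ of the $n$ are affected, each by at least $-3$. Either way $\frac1{|\mathcal P|}\sum_{p\in\mathcal P}\sum_{n\le y}\mlegendre{n}{p}\ge\Psi(y,z)-o(y)\gg_A y$ persists, finishing the proof. The standard inputs used are the smooth-number asymptotic $\Psi(y,z)\sim\rho(u)y$ for bounded $u=\ln y/\ln z$, quadratic reciprocity for Jacobi symbols, orthogonality of Dirichlet characters, and the Siegel--Walfisz theorem together with Landau's theorem on the uniqueness of the exceptional modulus.
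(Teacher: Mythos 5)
Your core mechanism is the same as the paper's: you restrict to primes $p\in(x,2x]$ modulo which every prime up to a small bound is a quadratic residue, so that the $z$-smooth $n\le y$ (a positive proportion, since $u=4A$ is bounded) each contribute $+1$, and you control the remaining $n$ via prime sums of nonprincipal characters of small conductor. The paper does exactly this in disguise: its weight $w_p(M)=\prod_{q\le M}\left(1+\mlegendre{q}{p}\right)$ equals $2^{\pi(M)}$ times the indicator of your set $\mathcal P$, its Lemma 5 combines your smooth-number count with the divisor bound for the multiplicities, and expanding the product plays the role of your orthogonality step (while Lemma 4's Kronecker-symbol formalism replaces your explicit use of reciprocity). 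Where you genuinely diverge is the exceptional zero: the paper keeps the possibly exceptional modulus $g$ in play and wins through the exact identity $r_{A,x}(g)=r_{A,x}(1)$ when $g\mid P(M)$ together with its Lemma 6, whereas you excise a prime from the modulus to dodge $q^*$ altogether. That alternative is viable, but it is precisely where your write-up has a gap.

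The gap: you assert that if $q_1$ is the largest prime factor of $q^*$ not exceeding $z$, then $q_1\to\infty$, and you lean on this to conclude that only $o(y)$ smooth $n$ are lost and only $o(y)$ values of $n$ carry a non-negligible exceptional contribution. This is unjustified and false in general: nothing prevents $q^*=3m$ with every prime factor of $m$ exceeding $z$, in which case $q_1=3$ for arbitrarily large $x$ and both of your $O(\cdot/q_1)=o(y)$ claims fail. The argument is repairable, but by a different accounting. First, the smooth $n$ divisible by $q_1$ number at most $\Psi(y/q_1,z)\sim\rho(4A)y/q_1$, and since each now contributes at least $-1$ instead of $+1$, the smooth main term only degrades to $(1-2/q_1+o(1))\rho(4A)y\ge(1/3+o(1))\rho(4A)y$, still $\gg_A y$. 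Second, the $n$ for which the exceptional term genuinely appears are not merely those divisible by $q_1$: the conductor of the primitive character inducing $\chi\chi_{b^{*}}$ is divisible by the squarefree kernel $b$ of the rough part and must equal $q^{*}$, which forces the entire odd part of $q^{*}$ to divide $n$; since $q^{*}\to\infty$, these number $O(y/q^{*})=o(y)$, each contributing at least $-1$ trivially (your $-3$ bound is not needed). Finally, your case split should be on whether the \emph{odd} part of $q^{*}$ has a prime factor $\le z$: if its only small prime factor is $2$ you cannot delete the condition $p\equiv 1\pmod 8$ without losing reciprocity, but then the odd part of $q^{*}$ is nontrivial and $z$-rough, so $q^{*}\nmid M$ automatically and no deletion is required. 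With these corrections your proof closes.
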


\begin{remark}

Let $A>0$ and $x$ be large enough. Taking $p \in (x,2x]$ such that

$$\sum_{n\leq (\ln x)^A} \mlegendre{n}{p}\gg_A (\ln p)^A$$

and $\begin{bf}{G}\end{bf}=\mathbb Z/p\mathbb Z, X=\{1,2,\ldots,z\},Y=\{0,z,2z,\ldots,z(z-1)\} \subset \begin{bf}G\end{bf}$, where $z=[(\ln p)^{A/2}]$ we get

$$\sum_{\substack{x \in X \\ y \in Y}} \mlegendre{x+y}{p}=\sum_{n \leq z^2} \mlegendre{n}{p} \geq c_A(\ln p)^A\sim c_A|X||Y|$$

for some positive constant $c_A$. So, if $\mathcal A$ is the set of all quadratic residues in $\mathbb Z/p\mathbb Z$, we have

$$\sum_{\substack{x \in X\\ y \in Y}} \chi_{\mathcal A}(x+y)=\frac{1}{2}|X||Y|+\frac{1}{2}\sum_{\substack{x \in X\\ y \in Y}} \mlegendre{x+y}{p}\geq \frac{1+c_A+o(1)}{2}|X||Y|,$$

where $\chi_{\mathcal A}(\cdot)$ is the characteristic function of the set $\mathcal A$. This inequality, together with the Theorem 2 of the paper \cite{KS} proves that the set of quadratic residues modulo $p$ does not behave like a random subset of $\mathbb Z/p\mathbb Z$ for infinitely many $p$.

\end{remark}

\section{Lemmas and proof of main theorem}

To prove the Theorem 3, we need some auxilary lemmas.

 \begin{lemma}
 
 For some constants $c$ and $c_1>0$  and any nonprincipal character $\chi$ to the modulus $q \leq e^{c_1\sqrt{\ln x}}$ we have
 
 $$\sum_{p\leq x} \chi(p)\ln p=-\delta \frac{x^\beta}{\beta}+O(xe^{-c_1\sqrt{\ln x}}),$$
 
 where $\delta=1$ if $L(s,\chi)$ has a real zero $\beta$ with $\beta>1-\frac{c}{\ln q}$ and $\delta=0$ otherwise.
 \end{lemma}
 \begin{proof}
See \cite{K}, Chapter IX, proof of the Theorem 6.
\end{proof}
 \begin{lemma}
 
 Let $\chi_1$ and $\chi_2$ be two different primitive real characters to the moduli $q_1$ and $q_2$, and suppose that the functions $L(s,\chi_1)$ and $L(s,\chi_2)$ have real zeros $\beta_1$ and $\beta_2$. Then the inequality
 
 $$\min\{\beta_1,\beta_2\}<1-\frac{c_2}{\ln (q_1q_2)}$$
 
 holds for some positive absolute constant $c_2$.
 
 \end{lemma}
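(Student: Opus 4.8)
The plan is to run the classical argument excluding two Siegel zeros at once, via a product of $L$-functions whose logarithm has non-negative Dirichlet coefficients. First I would record that $\chi_1\chi_2$ is non-principal, with conductor dividing $q_1q_2$: if $\chi_1\chi_2$ were principal modulo $q_1q_2$, then $\chi_1$ and $\chi_2$ would agree on all integers prime to $q_1q_2$ and so induce one and the same character modulo $q_1q_2$, forcing $q_1=q_2$ and $\chi_1=\chi_2$, contrary to hypothesis. Then I would set
\[
F(s)=\zeta(s)L(s,\chi_1)L(s,\chi_2)L(s,\chi_1\chi_2),
\]
where $\chi_1\chi_2$ is taken as the literal product character modulo $q_1q_2$ (not the primitive character inducing it, since that replacement could destroy non-negativity at primes dividing $q_1q_2$). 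Since $\chi_j(p)^k\in\{-1,0,1\}$, one has $\log F(s)=\sum_p\sum_{k\geq1}\frac{(1+\chi_1(p)^k)(1+\chi_2(p)^k)}{kp^{ks}}$ for $\operatorname{Re}s>1$, a Dirichlet series with non-negative coefficients; hence $-F'/F$ also has non-negative coefficients, and $-\frac{F'}{F}(\sigma)\geq0$ for every real $\sigma>1$. This positivity is the whole reason for multiplying the four factors together.

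Next I would feed in the Hadamard factorisations. For real $\sigma\in(1,2)$ one has $-\frac{\zeta'}{\zeta}(\sigma)\leq\frac1{\sigma-1}+O(1)$; and for a primitive real character $\psi$ of conductor $f$ whose $L$-function has a real zero $\beta_\psi$,
\[
-\frac{L'}{L}(\sigma,\psi)\leq\tfrac12\ln f+O(1)-\frac1{\sigma-\beta_\psi},
\]
obtained from the partial-fraction formula for $L'/L$ by keeping only the term of the real zero and discarding the rest (each discarded term contributes $\operatorname{Re}\frac1{\sigma-\rho}\geq0$, since $\sigma>1>\operatorname{Re}\rho$). Applying this bound to $\chi_1$ and $\chi_2$ with their real zeros $\beta_1,\beta_2$, using only the crude inequality $-\frac{L'}{L}(\sigma,\chi_1\chi_2)\leq C_0\ln(q_1q_2)$ for $\chi_1\chi_2$ (no use of its zeros; its imprimitivity costs merely a further $O(\ln(q_1q_2))$), and adding the four inequalities, I would obtain
\[
0\leq-\frac{F'}{F}(\sigma)\leq\frac1{\sigma-1}-\frac1{\sigma-\beta_1}-\frac1{\sigma-\beta_2}+C\ln(q_1q_2)
\]
for an absolute constant $C$ and all $\sigma\in(1,2)$.

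The last step is an optimisation in $\sigma$. Writing $\beta=\min\{\beta_1,\beta_2\}$, we have $\frac1{\sigma-\beta_1}+\frac1{\sigma-\beta_2}\geq\frac2{\sigma-\beta}$, hence $\frac2{\sigma-\beta}\leq\frac1{\sigma-1}+C\ln(q_1q_2)$; choosing $\sigma=1+\frac\lambda{\ln(q_1q_2)}$ for a sufficiently small absolute constant $\lambda$ and solving for $1-\beta$ gives $1-\beta\gg\frac1{\ln(q_1q_2)}$ as soon as $q_1q_2$ exceeds an absolute bound, while the finitely many pairs below that bound are disposed of by the classical non-vanishing $L(1,\psi)\neq0$, which keeps their real zeros a fixed distance from $1$. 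This yields the lemma with an explicit $c_2$. I expect the only genuine difficulty to be this constant-chasing: one must check that the factor $2$ produced by having \emph{two distinct} real zeros really does beat the coefficient $C\ln(q_1q_2)$ contributed by the $L$-factors — for a single character the same computation gives merely $\frac1{\sigma-\beta}\leq\frac1{\sigma-1}+O(\ln q)$, which is vacuous, reflecting the familiar fact that a lone Siegel zero cannot be excluded this way. (One could instead replace the Hadamard factorisations by Lemma~1, starting from $\sum_{p\leq x}(1+\chi_1(p))(1+\chi_2(p))\ln p\geq0$; but since Lemma~1 needs $\ln x\gg\ln^2(q_1q_2)$ this only gives the weaker bound $\beta<1-c/\ln^2(q_1q_2)$.)
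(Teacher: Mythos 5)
Your proof is correct. The paper does not actually prove this lemma --- it only cites Karatsuba, \emph{Basic Analytic Number Theory}, Ch.~IX, Theorem~4 --- and your argument (positivity of the coefficients of $\log\bigl(\zeta(s)L(s,\chi_1)L(s,\chi_2)L(s,\chi_1\chi_2)\bigr)$, the partial-fraction bound on $-L'/L$ retaining only the real zero, and the choice $\sigma=1+\lambda/\ln(q_1q_2)$) is precisely the standard Landau-type argument that the cited reference uses, with the constant-tracking done correctly.
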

\begin{proof}
See \cite{K}, Chapter IX, Theorem 4.
\end{proof}
\begin{lemma}

Let $\chi$ be a real primitive character to the modulus $q$. Then for some absolute constant $c_3>0$ we have

$$L(\sigma,\chi)\neq 0 \text{ for }\sigma>1-\frac{c_3}{\sqrt q \ln^4 q}.$$

\end{lemma}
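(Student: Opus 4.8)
The plan is to deduce the zero-free region from two ingredients: a lower bound $L(1,\chi)\geq c/\sqrt q$ with an \emph{absolute, effective} constant $c>0$, and an upper bound $|L'(\sigma,\chi)|\ll(\ln q)^2$ holding uniformly in the short range $1-\tfrac1{\ln q}\leq\sigma\leq 1$. One then writes $L(\sigma,\chi)=L(1,\chi)-\int_\sigma^1 L'(u,\chi)\,du$ and concludes that $L(\sigma,\chi)>0$ as soon as $1-\sigma$ is smaller than a suitable absolute multiple of $\tfrac1{\sqrt q(\ln q)^2}$; since for $\sigma>1$ the Euler product already gives $L(\sigma,\chi)>0$, this yields $L(\sigma,\chi)\neq 0$ for $\sigma>1-\tfrac{c_3}{\sqrt q\ln^4 q}$ (in fact with $\ln^2 q$ in place of $\ln^4 q$, so the stated bound follows a fortiori after possibly shrinking the constant).

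For the lower bound I would use that a real primitive character modulo $q$ is precisely the Kronecker symbol $\chi_D$ of a fundamental discriminant $D$ with $|D|=q$ (the converse of the fact quoted from \cite{Mont}, Theorem 9.13), and then apply Dirichlet's class number formula. For $D<0$ it gives $L(1,\chi_D)=\tfrac{2\pi h(D)}{w\sqrt{|D|}}$ with $w\in\{2,4,6\}$, hence $L(1,\chi_D)\geq\tfrac{\pi}{3\sqrt q}$; for $D>0$ it gives $L(1,\chi_D)=\tfrac{2h(D)\ln\varepsilon_D}{\sqrt D}$ with $\varepsilon_D>1$ the fundamental unit, hence $L(1,\chi_D)\geq\tfrac{2\ln\frac{1+\sqrt5}{2}}{\sqrt q}$, where in both cases we used only $h(D)\geq 1$ and the elementary bound $\varepsilon_D\geq\frac{1+\sqrt5}{2}$. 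Thus $L(1,\chi)\geq c/\sqrt q$ with an absolute $c>0$.

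For the derivative bound I would start from $-L'(\sigma,\chi)=\sum_{n\geq1}\chi(n)(\ln n)n^{-\sigma}$ and split the sum at $n=q$. In the range $n\leq q$ we have $n^{1-\sigma}\leq q^{1-\sigma}\leq q^{1/\ln q}=e$, so $\sum_{n\leq q}(\ln n)n^{-\sigma}\leq e\sum_{n\leq q}\tfrac{\ln n}{n}\ll(\ln q)^2$; in the range $n>q$, Abel summation with the trivial bound $\bigl|\sum_{n\leq t}\chi(n)\bigr|\leq q$ together with the monotonicity of $t\mapsto(\ln t)t^{-\sigma}$ for $t\geq q$ contributes $\ll(\ln q)\,q^{1-\sigma}\ll\ln q$. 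Hence $|L'(\sigma,\chi)|\ll(\ln q)^2$ throughout $1-\tfrac1{\ln q}\leq\sigma\leq1$, which is what the interpolation step needs.

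The genuinely substantive step is the first one: it is where the factor $\sqrt q$ enters, and the point that must not be mishandled is that the lower bound for $L(1,\chi)$ has to come with an \emph{effective} absolute constant. Siegel's theorem would give the far stronger $L(1,\chi)\gg_\varepsilon q^{-\varepsilon}$ but with an ineffective implied constant, which is useless for a quantitative statement like this lemma; it is precisely the crude inequality $h(D)\geq1$ (and the trivial lower bound on the fundamental unit) that keeps everything effective and absolute. Once this is granted, the derivative estimate and the interpolation from $s=1$ are entirely routine.
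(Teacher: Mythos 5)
Your argument is correct, and it is worth saying up front that the paper itself offers no proof of this lemma: it simply cites \cite{K}, Chapter IX, Theorem 3. What you have written is a genuine, self-contained proof of the statement, and in fact of a slightly stronger one, since your interpolation yields a zero-free interval of length $\asymp \frac{1}{\sqrt q\,(\ln q)^2}$ rather than $\frac{1}{\sqrt q\,\ln^4 q}$. All three steps check out: the identification of real primitive characters with Kronecker symbols of fundamental discriminants is the standard converse of \cite{Mont}, Theorem 9.13; the class number formula with the crude inputs $h(D)\geq 1$, $w\leq 6$ and $\varepsilon_D\geq\frac{1+\sqrt5}{2}$ gives an effective absolute lower bound $L(1,\chi)\geq c/\sqrt q$; the derivative bound $|L'(\sigma,\chi)|\ll(\ln q)^2$ on $1-\frac{1}{\ln q}\leq\sigma\leq1$ follows as you say from splitting at $n=q$ and Abel summation (the function $t\mapsto(\ln t)t^{-\sigma}$ is indeed decreasing for $t\geq q$ when $\sigma\geq\frac12$, so the tail contributes $\ll(\ln q)q^{1-\sigma}\ll\ln q$); and positivity for $\sigma>1$ via the Euler product closes the range. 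You are also right to flag effectivity as the crux — Siegel's bound would be useless here. The one difference worth recording against the cited source is that Karatsuba's textbook treatment reaches the effective $q^{-1/2}$ threshold by elementary means, exploiting the nonnegativity of the coefficients of $\zeta(s)L(s,\chi)$ (with $(1*\chi)(m^2)\geq1$) together with the Dirichlet hyperbola method, rather than importing the class number formula; your route trades that elementary computation for a heavier but classical algebraic input, and both rest on the same mechanism of combining an effective lower bound near $s=1$ with a logarithmic-power bound on the derivative.
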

\begin{proof}
See \cite{K}, Chapter IX, Theorem 3.
\end{proof}

\begin{lemma}

Let $a\neq 0$ be an integer with $a=bc^2$, where $b$ is squarefree and $c$ is a positive integer. Then there exists a primitive quadratic character $\chi_b$ to the modulus $|b^*|$ such that for any odd prime $p$ the equality

$$\mlegendre{a}{p}=\chi_{0,c}(p)\chi_b(p)$$

holds, where

$$b^*=\begin{cases} b, \text{if } b \equiv 1 \pmod 4 \\ 4b, \text{if } b\not\equiv 1 \pmod 4\end{cases}$$

and $\chi_{0,c}$ is the principal character to the modulus $c$.
\end{lemma}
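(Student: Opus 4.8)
The identity here is essentially formal once one recalls the standard description of primitive quadratic characters via fundamental discriminants, so the plan is a two-step reduction followed by a short case check. First I would reduce everything to a statement about $b$ alone. For an odd prime $p$ the Kronecker symbol $\mlegendre{a}{p}$ coincides with the Legendre symbol, and by multiplicativity in the numerator together with $a=bc^2$ one has
$$\mlegendre{a}{p}=\mlegendre{b}{p}\mlegendre{c}{p}^2.$$
Now $\mlegendre{c}{p}^2$ equals $1$ when $p\nmid c$ and $0$ when $p\mid c$, i.e. it equals $\chi_{0,c}(p)$, so $\mlegendre{a}{p}=\chi_{0,c}(p)\mlegendre{b}{p}$ for every odd $p$. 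It therefore suffices to produce a primitive quadratic character $\chi_b$ to the modulus $|b^*|$ with $\chi_b(p)=\mlegendre{b}{p}$ for all odd primes $p$.

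For the second step I would take $\chi_b$ to be the Kronecker symbol $\mlegendre{b^*}{\cdot}$ and invoke the fact recalled in the introduction (\cite{Mont}, Theorem~9.13): for a fundamental discriminant $D$ the function $n\mapsto\mlegendre{D}{n}$ is a primitive character to the modulus $|D|$, and being real it is quadratic. The only thing to check is that $b^*$ is a fundamental discriminant. Since $b$ is squarefree, $b\not\equiv0\pmod4$, so $b\bmod 4\in\{1,2,3\}$. If $b\equiv1\pmod4$ then $b^*=b$ is squarefree and $\equiv1\pmod4$, hence a fundamental discriminant. If $b\equiv2$ or $3\pmod4$ then $b^*=4b$ satisfies $4\mid b^*$ and $b^*/4=b$ is squarefree and $\equiv2$ or $3\pmod4$, so again $b^*$ is a fundamental discriminant. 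Finally, for odd $p$ one has $\mlegendre{b^*}{p}=\mlegendre{b}{p}$: this is trivial when $b^*=b$, and when $b^*=4b$ it follows from $\mlegendre{4b}{p}=\mlegendre{4}{p}\mlegendre{b}{p}=\mlegendre{b}{p}$ since $4$ is a perfect square and $p$ is odd. Combining the two steps yields $\mlegendre{a}{p}=\chi_{0,c}(p)\chi_b(p)$.

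I do not expect a genuine obstacle: the content is bookkeeping with the Kronecker symbol, and the only place demanding attention is getting the normalization $b^*$ right in every residue class and every sign of $b$. In particular one must read $b\bmod4$ as an element of $\{1,2,3\}$, so that for instance $b=-3$ falls into the case $b\equiv1\pmod4$ with $b^*=-3$, and one must notice that multiplying by $4$ is needed precisely when $b\not\equiv1\pmod4$ and would destroy fundamentality otherwise. One should also dispose of the degenerate case $b=1$ separately, where $b^*=1$, $\chi_b$ is the trivial character, and $a=c^2$ makes the identity immediate.
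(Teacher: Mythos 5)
Your proof is correct and follows essentially the same route as the paper's: split off $\mlegendre{c^2}{p}=\chi_{0,c}(p)$ by multiplicativity, verify that $b^*$ is a fundamental discriminant by the same case check, define $\chi_b=\mlegendre{b^*}{\cdot}$ via \cite{Mont}, Theorem~9.13, and note $\mlegendre{b^*}{p}=\mlegendre{b}{p}$ since $b^*/b\in\{1,4\}$. No substantive differences.
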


\begin{proof}[of Lemma 4] Using the multiplicativity of Legendre symbol, we obtain

$$\mlegendre{a}{p}=\mlegendre{b}{p}\mlegendre{c^2}{p}.$$

If $p \mid c$, then $\mlegendre{c^2}{p}=0$. On the other hand, if $c$ and $p$ are coprime, then $c^2$ is a quadratic residue modulo $p$, so $\mlegendre{c^2}{p}=1$. Consequently, the equality

$$\mlegendre{c^2}{p}=\chi_{0,c}(p)$$

holds.

Let us now prove that $\mlegendre{b}{p}=\chi_b(p)$ for some primitive charater $\chi_b$ to the modulus $|b^*|$. Note that $b^*$ is a fundamental discriminant. Indeed, if $b \equiv 1 \pmod 4$ then $b^*=b$ is a fundamental discriminant. If, however, $b \not\equiv 1 \pmod 4$, then due to the fact that $b$ is squarefree we have $b \equiv 2 \text{ or } 3 \pmod 4$, thus $b^*=4b$ is also a fundamental discriminant. Hence, $\chi_b(n)=\mlegendre{b^*}{n}$ is a primitive quadratic character to the modulus $|b^*|$ (see \cite{Mont}, Theorem 9.13). On the other hand, $\frac{b^*}{b}$ can take only two values: $1$ and $4$. Therefore, for any odd prime number $p$ we have

$$\chi_b(p)=\mlegendre{b^*}{p}=\mlegendre{b^*/b}{p}\mlegendre{b}{p}=\mlegendre{b}{p},$$

which concludes the proof.

\end{proof}

\begin{lemma}

Let $A\geq 1$, $x\geq x_0(A)$, $M=(\ln x)^{1/3}$, $N=M^{3A}=(\ln x)^A$ and $P(M)=\prod\limits_{q\leq M} q$, where $q$ runs over prime numbers. For any squarefree $c \leq e^{2M}$ we define

$$r_{A,x}(c)=\#\{(a,d): 1\leq a \leq N, d\mid P(M), ad=cy^2 \text{ for some integer $y$}\}.$$

Then we have $r_{A,x}(c) \ll \frac{N\ln N}{M}$ if $c\nmid P(M)$ and $r_{A,x}(c)=r_{A,x}(1)\gg_A N$ if 
$c\mid P(M)$.

\end{lemma}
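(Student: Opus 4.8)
The plan is to analyse the equation $ad=cy^{2}$ prime by prime. Write $v_{q}(n)$ for the exponent of a prime $q$ in a nonzero integer $n$. Comparing $q$-adic valuations in $ad=cy^{2}$ gives $v_{q}(a)+v_{q}(d)\equiv v_{q}(c)\pmod 2$ for every prime $q$. Since $d\mid P(M)$ is squarefree we have $v_{q}(d)\in\{0,1\}$, so for $q\leq M$ this congruence determines $v_{q}(d)$ uniquely in terms of $v_{q}(a)$ and the fixed $c$, while for $q>M$, where $v_{q}(d)=0$, it reduces to the constraint $v_{q}(a)\equiv v_{q}(c)\pmod 2$. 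Conversely, any $a$ satisfying the latter parity constraints determines a unique candidate divisor $d\mid P(M)$, and using $v_{q}(c)\leq1$ one checks that every $v_{q}(ad/c)$ is then even and nonnegative, so $ad/c$ is the square of a positive integer. Consequently $r_{A,x}(c)$ equals the number of integers $a\in[1,N]$ with $v_{q}(a)\equiv v_{q}(c)\pmod 2$ for every prime $q>M$, each such $a$ coming from exactly one pair $(a,d)$.

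Given this reduction, the two cases follow quickly. If $c\nmid P(M)$, then $c$, being squarefree, has a prime factor $q_{0}>M$; every $a$ counted has $v_{q_{0}}(a)$ odd, hence $q_{0}\mid a$, so $r_{A,x}(c)\leq\lfloor N/q_{0}\rfloor<N/M$, which is even stronger than the asserted bound $\ll N\ln N/M$. If $c\mid P(M)$, then every prime $q>M$ is coprime to $c$, so the defining condition becomes simply that $v_{q}(a)$ be even for all $q>M$, with no dependence on $c$; therefore $r_{A,x}(c)=r_{A,x}(1)$. For the lower bound, every $M$-smooth integer $a\leq N$ satisfies this condition, so $r_{A,x}(1)\geq\Psi(N,M)$, the number of $M$-smooth integers in $[1,N]$. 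Since $M=N^{1/(3A)}$ with $3A$ a fixed constant, the classical smooth-number asymptotic $\Psi(t,t^{1/u})\sim\rho(u)t$ (holding for each fixed $u>0$, with $\rho(u)>0$) gives $\Psi(N,M)\geq\tfrac12\rho(3A)N\gg_{A}N$ for $x\geq x_{0}(A)$.

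The core of the argument is thus the elementary prime-by-prime bijection. Its one subtle point is that the divisor $d$ forced by the parities must make $ad/c$ an \emph{integer} square, i.e.\ every $v_{q}(ad/c)$ must be nonnegative; the only case requiring a look is $v_{q}(c)=1$, $v_{q}(a)=0$, where the construction forces $v_{q}(d)=1$, so that $v_{q}(ad/c)=0$. The single non-elementary ingredient is the smooth-number bound $\Psi(N,N^{1/(3A)})\gg_{A}N$; once that is taken for granted, both halves of the lemma fall out of the reduction with essentially no further computation.
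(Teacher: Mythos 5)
Your proof is correct, and it takes a genuinely different (and in one respect stronger) route than the paper's. The paper handles the case $c\nmid P(M)$ by writing $c=sr$ with $s\mid P(M)$ and $r$ composed of primes $>M$, mapping each pair $(a,d)$ to a triple $(s_1,d_1,z)$ with $s_1d_1z^2=a/r\leq N/M$, and then bounding the number of such triples by the divisor sum $\sum_{n\leq N/M}d(n)\ll N\ln N/M$; for $c\mid P(M)$ it exhibits an explicit involution $(a,d)\mapsto\bigl(a,\tfrac{cd}{(c,d)^2}\bigr)$ to prove $r_{A,x}(c)=r_{A,x}(1)$. Your prime-by-prime valuation analysis replaces both constructions with the single observation that, for fixed $a$, the squarefree divisor $d\mid P(M)$ is uniquely determined by the parity constraints, so that $r_{A,x}(c)$ is exactly the number of $a\leq N$ with $v_q(a)\equiv v_q(c)\pmod 2$ for all $q>M$. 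This makes the identity $r_{A,x}(c)=r_{A,x}(1)$ for $c\mid P(M)$ immediate, and in the case $c\nmid P(M)$ it yields the cleaner bound $r_{A,x}(c)\leq N/q_0<N/M$, which improves the paper's $N\ln N/M$ by a logarithm (either suffices for the application). You were right to flag the nonnegativity of $v_q(ad/c)$ as the one point needing care, and your check of the case $v_q(c)=1$, $v_q(a)=0$ is the correct one. The lower bound via $\Psi(N,M)=\Psi(N,N^{1/(3A)})\sim\rho(3A)N$ is the same as in the paper.
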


\begin{proof}[of Lemma 5] If $c \nmid P$, then $c=sr$, where $s \mid P$ and all the prime factors of $r$ are greater than $M$. 









Starting with the pair $(a,d)$ which satisfies the conditions $ad=cy^2$, $a \leq N$ and $d\mid P(M)$, we construct the integer triple  $(d_1,s_1,z)$ in the following way:

$$d_1=\frac{d}{(d,s)}, s_1=\frac{s}{(s,d)}, z=\frac{y}{d_1}.$$

Let us show that $z$ is integer. Indeed, we have $ad=cy^2$, therefore $ad_1=s_1ry^2$, but $(d_1,s_1r)=1$ as all the prime factors of $r$ are greater than $M$ and $d_1$ and $s_1$ are coprime by definition. Hence, $y^2$ is divisible by $d_1$. But $d_1$ is squarefree, so $y$ is also divisible by $d_1$, consequently $z$ is integer. On the other hand, $d_1s_1z^2\leq \frac{N}{r}<\frac{N}{M}$. Indeed,

$$d_1s_1z^2=\frac{s_1y^2}{d_1}=\frac{sy^2}{d}=\frac{cy^2}{dr}=\frac{a}{r} \leq \frac{N}{r}.$$

Furthermore, the pair $(a,d)$ can be recovered from the triple $(s_1,d_1,z)$, due to the fact that

$$a=rs_1d_1z^2 \text{ and } d=\frac{sd_1}{s_1}.$$

Note now that for any $n \leq \frac{N}{M}$ the number of triples $(s_1,d_1,z)$ such that $s_1$ and $d_1$ are squarefree and coprime and $n=s_1d_1z^2$ does not exceed the number of divisors of $n$. Indeed, from these conditions we deduce that $s_1d_1$ is a squareefree number, so it is equal to the squarefree part of $n$. Hence, $z$ is uniquely defined and the number of possible pairs $(s_1,d_1)$ is equal to the number of divisiors of the squarefree part of $n$. Therefore, $r_{A,x}(c)$ does not exceed the number of triples satisfying these conditions, which is less than or equal to

$$\sum_{n \leq \frac{N}{M}}d(n) \ll \frac{N\ln N}{M},$$

which completes the proof in this case.\\
In the case when $c \mid P(M)$, with any pair $(a,d)$ satisfying the conditions $1\leq a\leq N$, $d\mid P(M)$, $ad=cy^2$ for some $y \in \mathbb Z$ we associate the pair $\left(a,\frac{cd}{(c,d)^2}\right)$. It is easy to see that $\frac{cd}{(c,d)^2} \mid P(M)$ and

$$\frac{acd}{(c,d)^2}=\frac{ac^2y^2}{(c,d)^2}=z^2,$$

where $z=\frac{cy}{(c,d)}$ is integer. Conversely, any pair $(a,d)$ with $1\leq a \leq N$, $d\mid P(M)$, $ad=z^2$ for some $z \in \mathbb Z$ corresponds to the pair $\left(a,\frac{cd}{(c,d)^2}\right)$. Then $\frac{cd}{(c,d)^2} \mid P(M)$ and

$$\frac{acd}{(c,d)^2}=\frac{cz^2}{(c,d)^2}=cy^2$$

for some integer $y$, as $z$ is divisible by $d$. These maps are mutually inverse, due to the fact that for $D=\frac{cd}{(c,d)^2}$ we have $(c,D)=\frac{c}{(c,d)}$ and so $\frac{cD}{(c,D)^2}=d$. Therefore $r_{A,x}(1)=r_{A,x}(c)$. Now note that for any $M-$smooth number $a\leq N$ there exists $d \mid P(M)$ with $ad=y^2$ (it suffices to choose the squarefree part of $a$ instead of $d$). Consequently,

$$r_{A,x}(1)\geq \Psi(N,M)=(\rho(3A)+o(1))N\geq \exp(-3A(\ln (3A)+\ln\ln A))N$$

for all large enough $A$, where $\rho$ is the Dickman function (see \cite{Hild}, pp 4-5), which completes the proof of lemma.
\end{proof}

\begin{lemma}

Assume that $x>e^4$ and let $\beta$ be a real number with $0<1-\beta\leq \frac{2}{\ln x}$. Then we have

$$x-\frac{(2x)^\beta}{\beta}+\frac{x^{\beta}}{\beta} \geq \frac{(1-\beta)x\ln x}{4e^2}.$$

\end{lemma}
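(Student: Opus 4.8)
The plan is to introduce $\delta=1-\beta$ and $t=\delta\ln x=(1-\beta)\ln x$. By hypothesis $t\in(0,2]$, and since $x>e^4$ we have $\delta\le 2/\ln x<1/2$, so in particular $\beta\in(1/2,1)$ and all quantities below are well defined. First I would put the left-hand side into multiplicative form: using $x^\beta=x\,e^{-t}$ together with $\frac{x^\beta}{\beta}-\frac{(2x)^\beta}{\beta}=-\frac{x^\beta(2^\beta-1)}{\beta}$, one obtains
$$x-\frac{(2x)^\beta}{\beta}+\frac{x^\beta}{\beta}=x\Bigl(1-\frac{e^{-t}(2^\beta-1)}{\beta}\Bigr),$$
so after dividing the desired inequality by $x$ it suffices to show $\frac{e^{-t}(2^\beta-1)}{\beta}\le 1-\frac{t}{4e^2}$.

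Next I would remove the $x$-dependence on the left by crude estimates. Since $\beta<1$ we have $2^\beta-1\le 1$, and since $\delta<1/2$ the elementary bound $\frac{1}{1-\delta}\le 1+2\delta\le e^{2\delta}$ gives $\frac1\beta\le e^{2\delta}=e^{2t/\ln x}$. Combining these,
$$\frac{e^{-t}(2^\beta-1)}{\beta}\le e^{-t+2t/\ln x}=e^{-t(1-2/\ln x)}\le e^{-t/2},$$
the last step using $2/\ln x<1/2$. It then remains to verify the one-variable inequality $e^{-t/2}\le 1-\frac{t}{4e^2}$ for $t\in[0,2]$. For this set $h(t)=1-\frac{t}{4e^2}-e^{-t/2}$; then $h(0)=0$ and $h'(t)=\tfrac12 e^{-t/2}-\tfrac1{4e^2}$, which is positive exactly when $t<4+2\ln 2$, hence throughout $[0,2]$, so $h\ge 0$ on $[0,2]$. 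Chaining the three displayed inequalities and multiplying back by $x$ gives the statement, since $\frac{t}{4e^2}x=\frac{(1-\beta)x\ln x}{4e^2}$.

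I do not expect a real obstacle: the constant $4e^2$ is very generous — numerically the ratio of the two sides is much larger than $1$ on the whole range — so the only thing to be careful about is to choose the throwaway bounds ($2^\beta-1\le 1$, $\frac1{1-\delta}\le e^{2\delta}$, $1-2/\ln x\ge\tfrac12$) so that the dependence on $x$ enters only through $\ln x>4$ while still leaving a true inequality in the single variable $t$ across all of $[0,2]$. An alternative would be to integrate the derivative of the left-hand side with respect to $\beta$ from $\beta$ up to $1$, using that it vanishes at $\beta=1$, but the substitution route above is cleaner and avoids estimating that derivative.
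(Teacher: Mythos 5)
Your proof is correct. The opening move is the same as the paper's: bounding $2^\beta-1\le 1$ is exactly the paper's estimate $(2x)^\beta<2x^\beta$, and both arguments thereby reduce to showing $x-\frac{x^\beta}{\beta}\ge\frac{(1-\beta)x\ln x}{4e^2}$. Where you diverge is in how that remaining inequality is handled. The paper writes $x-\frac{x^\beta}{\beta}=\int_\beta^1\frac{x^s}{s^2}(s\ln x-1)\,ds$ and bounds the integrand pointwise below by $\frac{x\ln x}{4e^2}$ using $x^s\ge x^{1-2/\ln x}=x/e^2$ and $s\ln x-1\ge\ln x-3\ge\frac{\ln x}{4}$; you instead normalize by $x$, substitute $t=(1-\beta)\ln x\in(0,2]$, absorb the $1/\beta$ via $\frac{1}{1-\delta}\le e^{2\delta}$, and finish with the one-variable inequality $e^{-t/2}\le 1-\frac{t}{4e^2}$ on $[0,2]$, verified by monotonicity of $h(t)=1-\frac{t}{4e^2}-e^{-t/2}$. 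Each of your intermediate steps checks out (the hypotheses $\ln x>4$ and $\delta\le 2/\ln x<1/2$ are used exactly where needed, and $4+2\ln 2>2$ makes $h'>0$ throughout the range). The integral representation is slightly slicker in that it produces the factor $(1-\beta)\ln x$ in one stroke as the length of the integration interval times $\ln x$; your substitution makes the structure of the inequality more transparent as a statement about the single parameter $t$ and would generalize painlessly if the range of $t$ were enlarged. Both yield the same constant $4e^2$ with room to spare.
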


\begin{proof}[of Lemma 6] As $\beta<1$, we have $(2x)^\beta<2x^\beta$. Thus,

$$x-\frac{(2x)^\beta}{\beta}+\frac{x^\beta}{\beta}>x-\frac{x^\beta}{\beta}.$$

On the other hand,

$$x-\frac{x^\beta}{\beta}=\int_\beta^1 \frac{x^s}{s^2}(s\ln x-1)ds.$$

For $\beta\leq s\leq 1$ the inequalities

$$\frac{x^s}{s^2} \geq x^\beta \geq x^{1-2/\ln x}=\frac{x}{e^2}$$

hold and

$$s\ln x-1\geq \beta\ln x-1\geq  \ln x\left(1-\frac{2}{\ln x}\right)-1=\ln x-3 \geq \frac{\ln x}{4},$$

consequently,

$$x-\frac{x^\beta}{\beta} \geq \int_\beta^1 \frac{x\ln x}{4e^2}ds=\frac{(1-\beta)x\ln x}{4e^2}.$$

The lemma is proved.

\end{proof}

\begin{remark}
For $1-\frac{2}{\ln x} \geq \beta \geq \frac12$ we have

$$\frac{(2x)^\beta}{\beta} \leq 4x^{\beta} \leq \frac{4}{e^2}x,$$

hence

$$x-\frac{(2x)^\beta}{\beta} \geq x\left(1-\frac{4}{e^2}\right) \geq \frac{x}{e^2}.$$

Therefore, for all $\frac12 \leq \beta<1$ and $x>e^4$ the inequality

$$x-\frac{(2x)^\beta}{\beta}+\frac{x^\beta}{\beta} \geq \frac{(1-\beta)x}{e^2}$$\

holds.
\end{remark}

Let us now prove the Theorem 3.

\begin{proof}[of Theorem 3] Fix some $A\geq1$, choose large enough $x$ and, following the notation of Lemma 5, take $M=(\ln x)^{1/3}$, $N=M^{3A}=(\ln x)^A$ and $P(M)=\prod\limits_{q \leq M} q$. Let us introduce the following notations

$$w_p(M)=\prod_{q \leq M} \left(1+\mlegendre{q}{p}\right)$$

and

$$S(p,N)=\sum_{n \leq N} \mlegendre {n}{p}.$$

It is easy to see that $w_p(M) \geq 0$ for any prime $p$. To prove the Theorem 3 it suffices to show that there exists at least one $p \in (x,2x]$ such that $S(p,N)\gg_A N$.

Consider the following sums:

$$S_0(x)=\sum_{x<p\leq 2x} w_p(M)\ln p$$

and

$$S_1(x,A)=\sum_{x<p\leq 2x} w_p(M)S(p,N)\ln p.$$

By the nonnegativity of $w_p(M)$, it is enough to show that the sum $S_0(x)$ is positive and

$$S_1(x,A) \gg_A NS_0(x).$$

We will prove the positivity of $S_0(x)$ first.

Expanding the brackets in the definition of $w_p(M)$, we obtain

$$w_p(M)=1+\sum_{\substack{d \mid P(M) \\ d>1}} \mlegendre{d}{p}.$$

Therefore,

$$S_0(x)=\sum_{x<p \leq 2x} \ln p+\sum_{\substack{d \mid P(M) \\ d>1}} \sum_{x<p\leq 2x} \mlegendre{d}{p}\ln p.$$

Let us choose a positive $c_4$ such that $c_4<0.1\min\{c,c_1,c_2,c_3\}$, where $c,c_1,c_2,c_3$ are the absolute constants from lemmas 1, 2 and 3. Then, by the lemmas 1 and 4, for any squarefree integer $1<f \leq e^{2M}$ there exists a primitive real character $\chi_f$ to the modulus $f^*$ such that

\begin{equation}
\label{1}
 \sum_{x<p\leq 2x} \mlegendre{f}{p}\ln p=-\delta_f\frac{(2x)^\beta-x^\beta}{\beta}+O(xe^{-c_4\sqrt{\ln x}}),
 \end{equation}
 
 where $\delta_f=1$ if the function $L(s,\chi_f)$ has a real zero $\beta$ with $\beta>1-\frac{c_4}{\ln f}$ and $\delta_f=0$ otherwise.
 
 On the other hand, due to the Lemma 2 there exists at most one squarefree $f \leq e^{2M}$ such that $\delta_f=1$. In the case when such a number exists, we denote it by $g$ and the corresponding real zero by $\beta_g$. Using lemma 3, for large enough $x$ we obtain the inequality
 
 $$\beta_g\leq 1-\frac{c_4}{\sqrt{g}\ln^4 g}\leq 1-\frac{1}{g}\leq 1-e^{-2M}.$$
 
Using the equality ~(\ref{1}) and relations $P(M)\leq e^{2M} \ll e^{0.4c_4\sqrt{\ln x}}$, we get the following expression for $S_0(x)$:
 
 $$S_0(x)=x-\delta_1\frac{(2x)^{\beta_g}-x^{\beta_g}}{\beta_g}+O(xe^{-0.5c_4\sqrt{\ln x}}),$$
 
where $\delta_1=1$, if $g$ exists and divides $P(M)$ and $\delta_1=0$ otherwise.
 
Due to the inequality $1-\beta_g\geq 1-e^{-2M}$ and Lemma 6, we obtain

$$x-\delta_1\frac{(2x)^{\beta_g}-x^{\beta_g}}{\beta_g} \gg x(1-\beta_g)\gg xe^{-2M} \gg xe^{-0.4c_4\sqrt{\ln x}}.$$

Consequently,

$$S_0(x)\sim x-\delta_1\frac{(2x)^{\beta_g}-x^{\beta_g}}{\beta_g},$$

and, in particular, $S_0(x)>0$. Let us now estimate $S_1(A,x)$. Multiplying the expressions for $w_p(M)$ and $S(p,N)$, we get

$$w_p(M)S_p(N)=\sum_{\substack{d \mid P(M) \\ n\leq N}} \mlegendre{dn}{p}.$$

If $d \mid P(M)$ and $n \leq N$, then $dn \leq e^{2M}$. If $b$ is a squarefree part of $dn$, then for any prime $x<p\leq 2x$ we have $\mlegendre{dn}{p}=\mlegendre{b}{p}$. In view of this, for any prime $x<p\leq 2x$ the equality

$$w_p(M)S_p(N)=\sum_{b\leq e^{2M}} r_{A,x}(b)\mlegendre{b}{p}$$

holds. Taking the sum of this expression over all prime numbers from the interval $(x,2x]$, we get

$$S_1(A,x)=\sum_{b \leq e^{2M}} r_{A,x}(b)\sum_{p<x\leq 2x} \mlegendre{b}{p}\ln p=$$
$$=r_{A,x}(1)\sum_{x<p\leq 2x}\mlegendre{1}{p}+\delta_2r_{A,x}(g)\sum_{x<p\leq 2x}\mlegendre{g}{p}+O\left(\sum_{\substack{b\neq 1,g\\ b\leq e^{2M}}} r_{A,x}(b)xe^{-c_4\sqrt{\ln x}}\right)=$$
$$=xr_{A,x}(1)-\delta_2\frac{(2x)^{\beta_g}-x^{\beta_g}}{\beta_g}r_{A,x}(g)+O(xe^{-0.5c_4\sqrt{\ln x}}),$$

where $\delta_2=1$, if $g$ exists and $\delta_2=0$ otherwise.

If $g$ doesn't exist, then $\delta_1=\delta_2=0$, so $$S_1(A,x)=xr_{A,x}(1)+O(xe^{-0.5c_4\sqrt{\ln x}})\gg_A Nx$$ by the Lemma 5. But we also have $S_0(A,x)\asymp x$ and so

$$\frac{S_1(A,x)}{S_0(x)}\gg_A N,$$

which conludes the proof in this case.

If  $g$ exists and does not divide $P(M)$, then $\delta_2=1$ and $\delta_1=0$. Therefore,

$$S_0(x) \asymp x$$

and

$$S_1(A,x)=xr_{A,x}(1)-\frac{(2x)^{\beta_g}-x^{\beta_g}}{\beta_g}r_{A,x}(g)+O(xe^{-0.5c_4\sqrt{\ln x}}).$$

By the Lemma 5, $r_{A,x}(1)\gg_A N$ and $r_{A,x}(g) \ll \frac{N\ln N}{M}$, hence

$$S_1(A,x)=r_{A,x}(1)x\left(1+O\left(\frac{\ln N}{M}\right)\right).$$

Consequently,

$$\frac{S_1(A,x)}{S_0(x)}\gg_A N.$$

It remains to consider the case when $g$ exists and divides $P(M)$. Then we have $\delta_1=\delta_2=1$, thus

$$S_0(x) \asymp x-\frac{(2x)^{\beta_g}-x^{\beta_g}}{\beta_g}$$

and

$$S_1(A,x)=xr_{A,x}(1)-\frac{(2x)^{\beta_g}-x^{\beta_g}}{\beta_g}r_{A,x}(g)+O(xe^{-0.5c_4\sqrt{\ln x}}).$$

Due to the Lemma 5, $r_{A,x}(1)=r_{A,x}(g)\gg_A N$. Using Lemma 6, we get

$$\frac{S_1(A,x)}{S_0(x)}=r_{A,x}(1)+O\left(\frac{e^{-0.5c_4\sqrt{\ln x}}}{1-\beta_g}\right)=(\rho(3A)+o(1))N+O\left(e^{-0.1c_4\sqrt{\ln x}}\right)\gg_A N,$$

which completes the proof of our theorem.

\end{proof}

\section*{Acknowledgements}

The author thanks Maxim Aleksandrovich Korolev and Sergei Vladimirovich Konyagin for the useful comments and discussion.

The author is partially supported by Laboratory of Mirror Symmetry NRU HSE, RF Government grant, ag. \textnumero 14.641.31.0001, the Simons Foundation and the Moebius Contest Foundation for Young Scientists and by the Program of the Presidium of the Russian Academy of Sciences \textnumero01 'Fundamental Mathematics and its Applications' under grant PRAS-18-01.

\end{fulltext}
\end{document}